\documentclass[11pt]{amsart}

\usepackage[article]{robertpreamble}
\usepackage[margin=1in]{geometry}

\newcommand{\Rea}{\operatorname{Re}}

\begin{document}


\title{Nested nodal loops of biharmonic functions}


\author{Javier Gómez-Serrano}
\address{Department of Mathematics, Brown University}
\email{javier\_gomez\_serrano@brown.edu}

\author{Robert Koirala}
\address{Department of Mathematics, University of California San Diego}
\email{rkoirala@ucsd.edu}

\author{Alexander Logunov}
\address{Department of Mathematics, Massachusetts Institute of Technology}
\email{alogunov@mit.edu}
\date{\today}

\begin{abstract}
   Given any \(n\in\mathbb{N}\), we construct a real-valued biharmonic polynomial on \(\mathbb{R}^2\) whose zero set contains a nest of \(n\) smooth, disjoint topological loops, meaning that the \(k\)-th loop lies inside the domain bounded by the \((k+1)\)-st loop for \(k=1,\ldots,n-1\). The case \(n=2\), i.e., the existence of two nested loops, is related to the failure of the Boggio-Hadamard conjecture from the early 1900s.
\end{abstract}

\maketitle 


\section{Introduction}
 
Consider a real-valued biharmonic function \(u\) on \(\R^2\), that is,
\begin{align}
    \Delta \Delta u=0
\end{align}
and assume that the zero set of $u$ has no singular points, i.e., $u(x)=0$ implies that $|\nabla u(x)|\neq0$.

Recently, Nadirashvili and the third author asked whether the nodal set \(u^{-1}(0)\) can contain a double nest of zero loops, meaning that one zero loop bounds a domain containing another zero loop, as in
\begin{tikzpicture}[baseline=-0.5ex,scale=0.7]
    \draw (0,0) circle (0.25);
    \draw (0,0) circle (0.10);
\end{tikzpicture}.

An example of a biharmonic polynomial with double nest of zero (nodal) loops was constructed in \cite{Koirala-inprep}. On the other hand, it appears that the question is related to the conjecture of Boggio--Hadamard \cite{Boggio1901, Boggio1905, Hadamard1908} on the positivity of Green's function for the bilaplacian \(\Delta \Delta\) in a simply connected domain with clamped boundary conditions.

Reformulation of the conjecture in the form of a maximum principle, see \cite{MR1696190}, states that if \(D\subset\mathbb{R}^2\) is a bounded simply connected domain with sufficiently smooth boundary and \(v\in C^4(D)\cap C^1(\overline D)\), then
\begin{align*}
    \Delta^2 v \leq 0 \quad \text{in } D,\qquad
    v\leq 0 \quad \text{on } \partial D,\qquad
    \frac{\partial v}{\partial n}\leq 0 \quad \text{on } \partial D
\end{align*}
imply
\begin{align*}
    v\leq 0 \quad \text{in } D.
\end{align*}
Here \(\partial/\partial n\) denotes differentiation in the inward normal direction.

The Boggio-Hadamard conjecture was posed in early 1900s, but was disproved almost 50 years later by Duffin and Garabedian, with an elementary counterexample later given by Shapiro and Tegmark, see \cite{MR29021, MR46440, MR1267051,MR2591975,MR745128}. Nevertheless, let us wrongly assume that the conjecture is true and let us show why it implies that $u$ cannot have a double nest. 

Suppose that \(u\) has a double nest consisting of two smooth loops \(\Gamma_1\) and \(\Gamma_2\), where \(\Gamma_2=\partial D\) for some simply connected domain \(D\), and \(\Gamma_1\subset D\). By assumption, \(\nabla u\) does not vanish on \(\Gamma_2\). After replacing \(u\) by \(-u\), if necessary, we may assume that \(\frac{\partial u}{\partial n}\leq 0 \quad \text{on } \partial D,\) also $u$ vanishes on $\partial D$ and biharmonic in $D$. The Boggio-Hadamard conjecture implies that $u \leq 0$ in $D$, which contradicts to the fact that $u$ changes sign near $\Gamma_1$.

\begin{theorem}\label{thm:main}
    For every integer $n\ge 1$, there exists a non-zero biharmonic polynomial $u_n:\R^2\to\R$ such that the nodal set $u_n^{-1}(0)$ contains $n$ nested smooth loops.
\end{theorem}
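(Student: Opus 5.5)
The plan is to reduce the theorem to a local statement and then build the nest from the inside out. The reduction is by approximation. Every biharmonic function in a simply connected neighbourhood of a closed disk $\overline{B}$ has the Almansi form $u=h_0+|z|^2h_1$ with $h_0,h_1$ harmonic. Truncating the Taylor series of $h_0$ and $h_1$ gives biharmonic polynomials converging to $u$ in $C^1(\overline B)$.

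Suppose $u$ has $n$ nested smooth nodal loops in $B$, each non-degenerate, i.e.\ $\nabla u\neq0$ on the loop. Then the zero set of any $C^1$-small perturbation still contains $n$ nearby smooth nested loops. This follows from the implicit function theorem applied in a thin tubular neighbourhood of each loop: $u$ has opposite signs on the two sides of the tube, and $\nabla u\neq 0$ inside it. So it suffices to produce, for each $n$, some biharmonic function on a neighbourhood of a closed disk with $n$ nested non-degenerate loops. Singularities are allowed outside that disk.

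For the construction I would work in polar coordinates with separated solutions. These are the radial solutions $1, r^2, \log r, r^2\log r$ and the angular modes $r^{\pm m}\cos m\theta$, $r^{2\pm m}\cos m\theta$.

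The induction hypothesis will carry more information than the theorem asks for. We will have a biharmonic $v_n$ near $\overline{B_1}$ with $n$ non-degenerate nested loops inside $B_{1/2}$, all surrounding the origin, and $v_n>0$ on the closed annulus $\{1/2\le |z|\le 1\}$, with quantitative lower bounds $v_n\ge c_n$ there and $|\nabla v_n|\ge c_n$ on the loops.

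To pass to $n+1$, I would set $v_{n+1}=v_n-\eta\,\varphi$. Here $\varphi$ is biharmonic near $\overline{B_1}$, is tiny in $C^1(B_{1/2})$ compared to $c_n$, and is large and positive on a circle $|z|=\rho\in(1/2,1)$. Then $v_{n+1}$ is positive just outside $|z|=1/2$ and negative on $|z|=\rho$. Along every ray the sign therefore changes, which produces a new loop surrounding the old ones. Non-degeneracy of that loop will be checked by arranging $\partial_r\varphi>0$ in the transition region.

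After that, we rescale and replace $v_{n+1}$ by $-v_{n+1}$ so that the hypothesis holds again on the new outer annulus. Kelvin's transform $u\mapsto |x|^2u(x/|x|^2)$ preserves biharmonicity in dimension two and maps loops around the origin to loops around the origin. I may use it to swap the roles of "inside" and "outside" when convenient.

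The main obstacle is the existence of $\varphi$. Positive harmonic functions obey Harnack's inequality, so $\varphi$ cannot be a positive harmonic function with ratio $\sup_{|z|=\rho}\varphi/\sup_{B_{1/2}}\varphi$ huge. The polynomial biharmonic functions whose leading part has a fixed sign are also too weak, since such leading terms are essentially $r^2$. The freedom has to come from genuinely biharmonic, non-polynomial pieces with singularities placed outside $\overline{B_1}$.

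The first thing I would try is a function of the form $\varphi=(r^2-a^2)\,h_1+h_0$. Here $h_1$ is a positive harmonic function concentrated near the boundary, for instance a sum of Poisson-type kernels with poles just outside $|z|=1$ placed at many equally spaced angles, which makes it nearly rotation invariant on $|z|=\rho$. The function $h_0$ is chosen to cancel the contribution of $(r^2-a^2)h_1$ on $B_{1/2}$ to first order. Since the factor $r^2-a^2$ is not harmonic, this cancellation is not ruled out by the maximum principle.

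If this fails, the fallback is to abandon the induction on a fixed disk. Instead I would start from an explicit two-loop example (as in \cite{Koirala-inprep}) and iterate the Kelvin transform composed with a small perturbation. At each step the previous nest is moved to a tiny neighbourhood of the origin, and one new loop is created at unit scale, by the same sign-change argument on rays.
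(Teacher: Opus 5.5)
There is a genuine gap, and it is exactly the step you flag as the main obstacle: the function $\varphi$ you need does not exist, and singular pieces outside $\overline{B_1}$ do not help. Let $\varphi$ be biharmonic near $\overline{B_\rho}$. Averaging $\varphi(e^{i\alpha}z)$ over $\alpha$ gives a radial biharmonic function that is smooth at $0$. Hence the circular mean $m(r)=\frac{1}{2\pi}\int_0^{2\pi}\varphi(re^{i\theta})\,d\theta$ equals $a+br^2$. If $|\varphi|\le\delta$ on $B_{1/2}$, then $|a|\le\delta$ and $|a+b/4|\le\delta$, so
\[
\min_{|z|=\rho}\varphi\;\le\; m(\rho)\;\le\;(8\rho^2-1)\,\delta\;<\;7\delta,\qquad \tfrac12<\rho<1 .
\]
Making $v_n-\eta\varphi<0$ on $|z|=\rho$, where $v_n\ge c_n$, therefore forces $\eta\sup_{B_{1/2}}|\varphi|>c_n/7$. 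So the perturbation is not negligible. The Harnack-type obstruction you wanted to escape holds for every biharmonic function on a disk centred at $0$, not only for positive harmonic ones.

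Worse, $a+br^2$ changes sign at most once. So a function biharmonic on a disk centred at $0$ can never have signs $-,+,-$ on three concentric circles centred at $0$ inside it. Your scheme produces exactly this. The function $v_{n+1}$ is positive on a circle just outside $|z|=1/2$ and negative on an annulus further out. After the sign flip and rescaling, the next perturbation must be negligible on the new $B_{1/2}$, which contains that circle. At the same time it must create the opposite sign on a still larger circle. The Kelvin fallback creates each new loop by the same sign change along rays between circles, so it hits the same wall. The problem is not the choice of building blocks but the use of circles as barriers.

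The missing idea is to use non-circular barriers adapted to high angular frequency. The paper takes the loops $\Gamma_M=\{r^2=1+\eta\cos M\theta\}$ and the polynomial $P_M=(1-r^2)\Rea(z^M)+\varepsilon_M\Rea(z^{2M})$. On $\Gamma_M$ the first term equals $-\eta r^M\cos^2(M\theta)\le 0$. The small $\Rea(z^{2M})$ term then makes $P_M$ strictly negative on $\Gamma_M$, even though $P_M$ has mean zero on every circle, consistent with the obstruction above.

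The induction is then pure degree separation. Suppose $\deg Q\le M-1$. Then $\varepsilon^MQ(z/\varepsilon)\pm P_{M+1}(z)$ has the sign of $\pm P_{M+1}$ on $\Gamma_{M+1}$, because the rescaled $Q$ tends to $0$ at unit scale. It also has the sign of $Q$ on $\varepsilon K_\pm$, because $P_{M+1}$ vanishes to order $M+1$ at $0$. Iterating gives alternating signs on $n+1$ nested loops. Adding a small generic constant (Sard) then yields $n$ nested nodal loops.

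Your reduction via Almansi approximation and the persistence of non-degenerate loops are correct. With wiggly barriers, however, no approximation, Kelvin transform or singular pieces are needed.
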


Let us note that within biharmonic polynomials of degree $d$ our construction gives only a nest of zero loops of depth $[c\log d]$ for some numerical $c>0$. For general real polynomials of two variables of degree $d$ (without biharmonicity) the maximal depth of a nest of the zero loops is at most $[d/2]$, which follows (Hilbert's observation from \cite{Hilbert1891}) by drawing a line through the center of the nest and using the fact that one-dimensional non-zero polynomial of degree $d$ cannot have more than $d$ roots; there are elementary examples of polynomials of even degree $d$ with a nest of depth $[d/2]$ such as 
\begin{align*}
    P(x,y)= \prod\limits_{k=1\dots [d/2]} (x^2+y^2 - k^2).
\end{align*}

In view of this, it seems that in the regime when the degree $d\to +\infty$ the zero loops of biharmonic polynomials  have more topological restrictions (such as the maximal depth of nests and the total number of nodal loops) compared to general polynomials of the same degree, see Figure \ref{fig:depth-4-5-nodal-loops}.

\subsection*{Acknowledgments} 

This paper involves essential contributions from Bogdan Georgiev, who cannot be named as a coauthor for technical reasons. JGS has been partially supported by NSF under Grants DMS-2245017, DMS-2247537 and DMS-2434314 and by a Simons Fellowship. AL was supported by a Packard Fellowship. JGS is also thankful for the hospitality of the MIT Department of Mathematics, where parts of this paper were done.

\subsection*{LLM Usage Disclosure}

The authors used a workflow combining ChatGPT Pro 5.5, Claude Opus 4.7 and Gemini Pro 3.1 during the development of this work, including for suggesting proof strategies and assisting with calculations. All mathematical statements, proofs, and verifications were subsequently completed and rigorously checked by the authors, who take full responsibility for the results. The writeup of the results was done by the authors.

\section{Proof of the main Theorem}

\subsection{The barrier bump}

Fix a number $0<\eta<1$ throughout the proof. For $R>0$ and an integer $M\ge 1$, define
\begin{align}\label{eq:gamma}
    \Gamma_{R,M}\coloneqq \left\{z=re^{i\theta} \in \C : r^2=R^2\bigl(1+\eta\cos(M\theta)\bigr)\right\}.
\end{align}
By \((r,\theta)\) we will denote the polar coordinates on \(\R^2\). Since $1+\eta\cos(M\theta)>0$, \(\Gamma_{R,M}\) is a smooth star-shaped Jordan curve around the origin.

\begin{figure}[!htb]
    \centering
    \includegraphics[width=.47\textwidth]{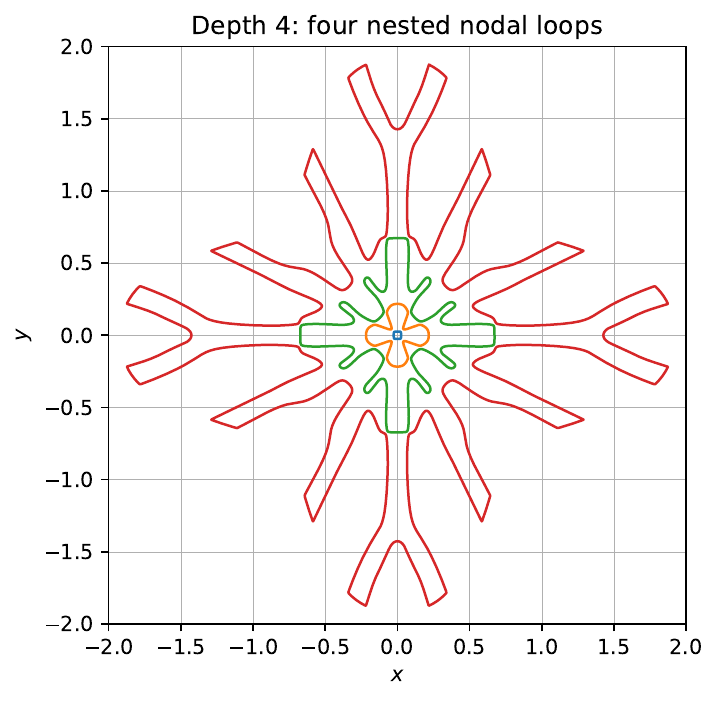}\qquad
    \includegraphics[width=.445\textwidth]{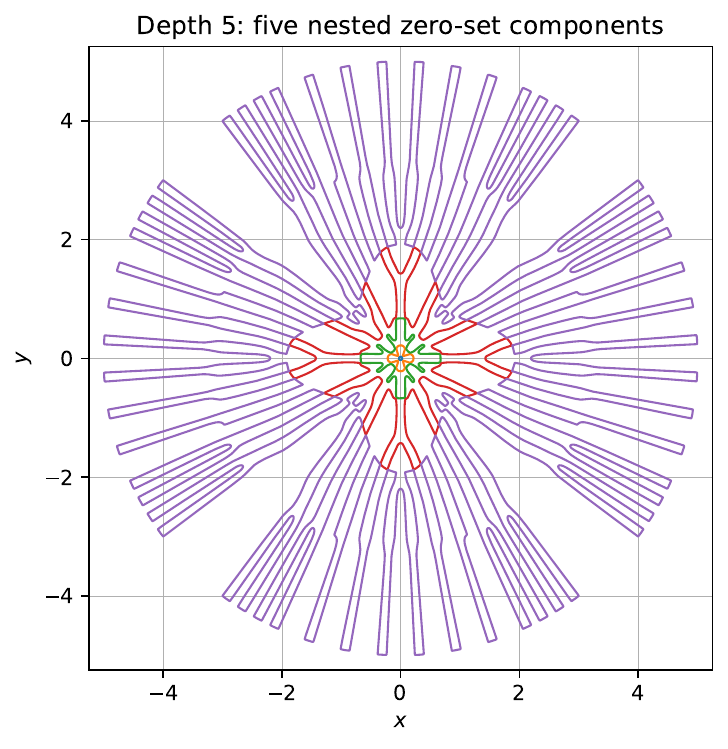}
    \caption{Nested components in zero sets of biharmonic polynomials. The left panel shows four nested smooth Jordan components for an explicit degree-$50$ biharmonic polynomial. The right panel shows five nested components: the inner four are inherited from the degree-$50$ example and the outer loop is added by one localized biharmonic bump. Only the components enclosing the origin are displayed.}
    \label{fig:depth-4-5-nodal-loops}
\end{figure}

Define a polynomial on \(\R^2=\C\)
\begin{align}\label{eq:brm}
    B_{R,M}(z)\coloneqq (R^2-r^2)\Rea(z^M)+\varepsilon_{R,M}\Rea(z^{2M}), \qquad z=re^{i\theta} \in \C, \quad \varepsilon_{R,M}\coloneqq \frac{\eta R^{2-M}}{8(1+\eta)^{M/2}}.
\end{align}  

\begin{lemma}\label{lem:biharmonic-bump}
    The polynomial $B_{R,M}$ is biharmonic.
\end{lemma}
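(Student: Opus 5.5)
Since $\Delta^2$ is linear, we treat the two terms of $B_{R,M}$ separately. The term $\varepsilon_{R,M}\Rea(z^{2M})$ is the real part of a holomorphic polynomial, so it is harmonic and hence biharmonic. The term $R^2\Rea(z^M)$ is harmonic for the same reason. It therefore suffices to show that $r^2\Rea(z^M)=|z|^2 h$ is biharmonic, where $h=\Rea(z^M)$ is harmonic.

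For this we use the product rule $\Delta(fg)=f\Delta g+g\Delta f+2\nabla f\cdot\nabla g$ with $f=|z|^2=x^2+y^2$ and $g=h$. Since $\Delta|z|^2=4$, $\nabla|z|^2=2(x,y)$ and $\Delta h=0$, we get
\begin{align*}
    \Delta\bigl(|z|^2h\bigr)=4h+4\,(x\partial_x+y\partial_y)h.
\end{align*}
Because $h$ is a homogeneous polynomial of degree $M$, Euler's identity gives $(x\partial_x+y\partial_y)h=Mh$. Hence $\Delta(|z|^2h)=4(M+1)h$, which is harmonic, and so $\Delta^2(|z|^2h)=0$.

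Alternatively, one can compute in polar coordinates: $r^{M+2}\cos(M\theta)$ satisfies $\Delta\bigl(r^{k}\cos(M\theta)\bigr)=(k^2-M^2)r^{k-2}\cos(M\theta)$. Taking $k=M+2$ and then applying the same formula again with $k=M$ gives $0$. There is no real obstacle here; the only point needing care is the gradient cross term, which is exactly what Euler's identity handles. We conclude that $B_{R,M}$ is a linear combination of biharmonic functions, hence biharmonic.
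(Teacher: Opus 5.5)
Your proof is correct and follows the paper's argument essentially verbatim. You split $B_{R,M}$ by linearity into harmonic pieces plus $r^2\Rea(z^M)$, and then the product rule with Euler's identity gives $\Delta(r^2 h)=4(M+1)h$, which is harmonic; your polar-coordinate check via $\Delta\bigl(r^k\cos(M\theta)\bigr)=(k^2-M^2)r^{k-2}\cos(M\theta)$ confirms the same identity.
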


\begin{proof}
    For every $k\ge 1$, $H_k(z)=\Rea(z^k)$ is a homogeneous harmonic polynomial of degree $k$. If $H$ is homogeneous and harmonic of degree $k$, then Euler's identity gives
    \begin{align*}
        \Delta(r^2H)
        =H\Delta(r^2)+2\nabla(r^2)\cdot\nabla H+r^2\Delta H
        =4H+4kH=4(k+1)H.
    \end{align*}
    Therefore $\Delta\Delta(r^2H)=0$. Since $H$ itself is harmonic, every summand in $B_{R,M}$ is biharmonic.
\end{proof}

\begin{lemma}\label{lem:barrier-sign}
    One has \(B_{R,M}(z) <0\) on \(\Gamma_{R,M}.\)
\end{lemma}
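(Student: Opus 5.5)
On $\Gamma_{R,M}$ we have $r^2=R^2(1+\eta\cos(M\theta))$, and I will simply substitute this into the formula for $B_{R,M}$ and bound the two terms separately. Since $\Rea(z^M)=r^M\cos(M\theta)$ and $\Rea(z^{2M})=r^{2M}\cos(2M\theta)$, writing $c=\cos(M\theta)$ we get
\begin{align*}
    B_{R,M}(z)=-\eta R^2 c\cdot r^M c+\varepsilon_{R,M}r^{2M}\cos(2M\theta)
    =-\eta R^2 r^M c^2+\varepsilon_{R,M} r^{2M}(2c^2-1),
\end{align*}
with $r=R(1+\eta c)^{1/2}$. The first term is $\le 0$ and vanishes exactly where $c=0$. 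At those points the second term equals $-\varepsilon_{R,M}r^{2M}<0$. So the sign is correct at the worst points, and what remains is to show that the positive part of the second term, namely $2\varepsilon_{R,M}r^{2M}c^2$, is dominated by $\eta R^2r^Mc^2$.

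I would therefore group the expression as
\begin{align*}
    B_{R,M}(z)=-c^2 r^M\bigl(\eta R^2-2\varepsilon_{R,M} r^M\bigr)-\varepsilon_{R,M} r^{2M}.
\end{align*}
The last term is strictly negative. The first term is nonpositive provided $2\varepsilon_{R,M}r^M\le \eta R^2$. On $\Gamma_{R,M}$ we have $r\le R(1+\eta)^{1/2}$, hence $r^M\le R^M(1+\eta)^{M/2}$. By the choice of $\varepsilon_{R,M}$ this gives
\begin{align*}
    2\varepsilon_{R,M}r^M\le \frac{2\eta R^{2-M}}{8(1+\eta)^{M/2}}R^M(1+\eta)^{M/2}=\frac{\eta R^2}{4}<\eta R^2.
\end{align*}
Therefore $B_{R,M}<0$ everywhere on $\Gamma_{R,M}$.

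The only real step is to use the identity $\cos(2M\theta)=2c^2-1$ so that the positive part of the perturbation carries the same factor $c^2$ as the main term. After that, the bound is immediate from the normalization of $\varepsilon_{R,M}$, which was chosen with room to spare (a factor of $4$).
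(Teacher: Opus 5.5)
Your proof is correct and follows essentially the same route as the paper: you substitute $R^2-r^2=-\eta R^2\cos(M\theta)$ and use $\cos(2M\theta)=2\cos^2(M\theta)-1$ to get $B_{R,M}=-(\eta R^2r^M-2\varepsilon_{R,M}r^{2M})\cos^2(M\theta)-\varepsilon_{R,M}r^{2M}$. Like the paper, you then use $r\le R\sqrt{1+\eta}$ to show the bracket is positive; you also make explicit the factor-of-$4$ margin that the paper leaves implicit.
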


\begin{proof}
    Using \(\Rea(z^k)=r^k\cos(k\theta)\) and \(\cos(2M\theta)=2\cos^2(M\theta)-1\) together \(R^2-r^2=-\eta R^2\cos (M\theta)\) on \(\Gamma_{R,M}\), it follows that on \(\Gamma_{R,M}\)
    \begin{align}
        B_{R,M}(z)=-(\eta R^2r^M-2\varepsilon_{R,M}r^{2M})\cos^2(M\theta)-\varepsilon_{R,M} r^{2M}.
    \end{align}
    Since \(r\leq R\sqrt{1+\eta}\) on \(\Gamma_{R,M}\), the definition of \(\varepsilon_{R,M}\) in \eqref{eq:brm} implies that \(\eta R^2r^M-2\varepsilon_{R,M}r^{2M}>0\) from which the claim follows.
\end{proof}

In the next section we will take linear combinations of rescaled bi-harmonic bumps to construct deep nodal nests. See Figure \ref{fig:depth-4-5-nodal-loops} for examples of nestes of depth 4 and 5.

\subsection{Inductive construction} 

Now, we choose $R=1$ and fix $ \eta \in (0,1)$. By the argument above, for every integer $M$ the bi-harmonic polynomial 
\begin{align*}
    P_M(z)\coloneqq  (1-r^2)\Rea(z^M)+  \frac{\eta }{8(1+\eta)^{M/2}} \Rea(z^{2M}), \qquad z=re^{i\theta} \in \C
\end{align*}
is strictly negative on a smooth loop around the origin:  
\begin{align*}
    P_M(z)< 0 \quad \textup{ on }\quad  \Gamma_{M}\coloneqq \left\{z=re^{i\theta}: r^2=1+\eta\cos(M\theta)\right\}.
\end{align*}
With this building block in mind, we will now show by induction that there exists a bi-harmonic polynomial with a nest of $n$ loops around the origin 
such that the polynomial is strictly negative on odd loops and strictly positive on even loops.

\begin{lemma} \label{induction lemma} 
    Let $Q(z)$ be a real polynomial on $\mathbb{R}^2$ with degree  $ \textup{deg}(Q) \leq M-1$ such that $Q<0$ on a compact set $K_{-}$ and $Q>0$ on a compact set $K_+$. Consider polynomials
    \begin{align*}
        Q_\varepsilon(z) \coloneqq \varepsilon^MQ( z\varepsilon^{-1})+P_{M+1}(z), \quad R_\varepsilon(z) \coloneqq \varepsilon^MQ( z\varepsilon^{-1})-P_{M+1}(z).
    \end{align*}
    Then for sufficiently small $\varepsilon>0$
    \begin{align*}
        Q_\varepsilon(z) <  0 \quad \textup{ on } \quad \Gamma_{M+1}, \quad Q_\varepsilon(z)< 0 \quad \textup{ on } \quad \varepsilon K_{-}, \quad Q_\varepsilon(z) > 0 \quad \textup{ on } \quad \varepsilon K_{+}
    \end{align*} 
    and 
    \begin{align*}
        R_\varepsilon(z) >  0 \quad \textup{ on } \quad \Gamma_{M+1}, \quad R_\varepsilon(z)< 0 \quad \textup{ on } \quad \varepsilon K_{-}, \quad R_\varepsilon(z) > 0 \quad \textup{ on } \quad \varepsilon K_{+}.
    \end{align*}
\end{lemma}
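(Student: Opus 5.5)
The plan is to prove the two families of inequalities by separate perturbation arguments. Near the origin the rescaled copy of $Q$ is the leading term. On the unit-scale loop $\Gamma_{M+1}$ the bump $P_{M+1}$ dominates. The key fact is that $P_{M+1}$ vanishes to order $M+1$ at the origin, while the rescaled $Q$ has size about $\varepsilon^{M-\deg Q}\ge\varepsilon$ on the relevant scale. The statements for $R_\varepsilon$ follow the same way with the sign of $P_{M+1}$ flipped, so I only treat $Q_\varepsilon$.

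\emph{Step 1: on $\Gamma_{M+1}$.} Write $Q(w)=\sum_{|\alpha|\le M-1} c_\alpha w^\alpha$. Then
\begin{align*}
\varepsilon^M Q(z\varepsilon^{-1})=\sum_{|\alpha|\le M-1} c_\alpha \varepsilon^{M-|\alpha|} z^\alpha .
\end{align*}
Every exponent satisfies $M-|\alpha|\ge 1$, so this term tends to $0$ uniformly on any compact set, in particular on $\Gamma_{M+1}$. By Lemma \ref{lem:barrier-sign} with $R=1$, $P_{M+1}$ is continuous and strictly negative on the compact set $\Gamma_{M+1}$. Hence $P_{M+1}\le -c<0$ there. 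For $\varepsilon$ small the perturbation is below $c/2$ in absolute value, which gives $Q_\varepsilon<0$ and $R_\varepsilon>0$ on $\Gamma_{M+1}$.

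\emph{Step 2: on $\varepsilon K_\pm$.} Substitute $z=\varepsilon w$ with $w\in K_\pm$. Then
\begin{align*}
Q_\varepsilon(\varepsilon w)=\varepsilon^M\Bigl(Q(w)+\varepsilon^{-M}P_{M+1}(\varepsilon w)\Bigr).
\end{align*}
Every monomial of $P_{M+1}$ has degree at least $M+1$: the terms are $\Rea(z^{M+1})$, $r^2\Rea(z^{M+1})$ and $\Rea(z^{2M+2})$. Therefore $|P_{M+1}(\varepsilon w)|\le C\varepsilon^{M+1}$ uniformly for $w$ in the compact set $K_-\cup K_+$ and $\varepsilon\le 1$. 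So the bracket equals $Q(w)+O(\varepsilon)$ uniformly. Compactness gives $Q\le -c'$ on $K_-$ and $Q\ge c'$ on $K_+$, and the signs persist for small $\varepsilon$. The same computation with $-P_{M+1}$ handles $R_\varepsilon$. Finally I take the minimum of the finitely many thresholds on $\varepsilon$.

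I expect no real obstacle. The only points needing care are the uniformity of the error bounds, which compactness of $\Gamma_{M+1}$ and $K_\pm$ provides, and the degree bookkeeping. The hypothesis $\deg Q\le M-1$ gives positive powers of $\varepsilon$ in Step 1, and the vanishing of $P_{M+1}$ to order $M+1$ at the origin gives the $O(\varepsilon)$ error in Step 2.
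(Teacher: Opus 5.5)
Your proof is correct and follows essentially the same route as the paper. On $\Gamma_{M+1}$ it uses the uniform convergence of $Q_\varepsilon$ to $P_{M+1}$, which comes from $\deg Q\le M-1$; on $K_\pm$ it uses the uniform convergence of $\varepsilon^{-M}Q_\varepsilon(\varepsilon z)$ to $Q$, which comes from $P_{M+1}$ vanishing to order $M+1$ at the origin, and you simply make the $O(\varepsilon)$ error bounds explicit where the paper states them as limits.
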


\begin{proof}    
    Note that $Q_{\varepsilon}$ converges to $P_{M+1}$ uniformly on $\Gamma_{M+1}$ as $\varepsilon \to 0$, because the degree of $Q$ is strictly smaller than $M$. Hence, for sufficiently small $\varepsilon>0$, the sign of $Q_{\varepsilon}$ on $\Gamma_{M+1}$ is the same as the negative sign of $P_{M+1}$ on $\Gamma_{M+1}$.

    To understand the sign of  $Q_\varepsilon$ on $\varepsilon K_{\pm}$, note that $P_{M+1}(z)=o(|z|^M)$ as $z\to 0$ and consider 
    \begin{align*}
        \varepsilon^{-M} Q_\varepsilon(\varepsilon z) = Q(z)+\varepsilon^{-M} P_{M+1}(\varepsilon z),
    \end{align*}
    which converges uniformly to Q on $K_{\pm}$. Thus the sign $\varepsilon^{-M} Q_\varepsilon(\varepsilon z)$ is positive on $K_+$ and negative on $K_{-}$ for sufficiently small $\varepsilon>0$, which implies that $Q_\varepsilon$ is positive on $\varepsilon K_+$ and negative on $\varepsilon K_{-}$.

    The statement for $R_\varepsilon$ follows similarly. 
\end{proof}

Applying Lemma \ref{induction lemma} for the sequence 
\begin{align*}
    M_{k+1}=2M_k+3, \quad M_1=1,
\end{align*}
we inductively construct  polynomials with the following properties.

\begin{corollary} \label{cor:main}
    For every integer $n$, there is a biharmonic polynomial $u_n$ of degree $2M_n+2$ and $n$ consequently nested disjoint smooth loops $\tilde\Gamma_k$, $k=1,\dots,n$, around the origin such that $u_n$ is strictly negative on the odd loops $\tilde\Gamma_{2k+1}$ and positive on the even loops $\tilde\Gamma_{2k}$. 
\end{corollary}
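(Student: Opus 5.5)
Induction on $n$, using Lemma~\ref{induction lemma} at each step. The inductive statement will carry, besides the properties in Corollary~\ref{cor:main}, the degree bound $\deg u_n\le 2M_n+2$ (equality in fact holds). Each loop $\tilde\Gamma_k$ will be a rescaled copy $\lambda_k\Gamma_{M_k}$ of one of the curves $\Gamma_M$; these are smooth star-shaped Jordan curves around the origin. Orientation convention: $\tilde\Gamma_1$ is innermost and $\tilde\Gamma_n$ outermost, so each new loop is added on the outside and the old picture is shrunk inside it.

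\emph{Base case $n=1$.} Take $u_1=P_{M_1}=P_1$ and $\tilde\Gamma_1=\Gamma_1$. This polynomial is biharmonic by Lemma~\ref{lem:biharmonic-bump}. Its degree is $\max(3,2)=3\le 2M_1+2=4$. It is strictly negative on $\Gamma_1$ by Lemma~\ref{lem:barrier-sign}.

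\emph{Inductive step.} Suppose $u_n$ is biharmonic with $\deg u_n\le 2M_n+2$, and has nested loops $\tilde\Gamma_1,\dots,\tilde\Gamma_n$ with the alternating signs. Set $K_-=\bigcup_{k \text{ odd}}\tilde\Gamma_k$ and $K_+=\bigcup_{k\text{ even}}\tilde\Gamma_k$; both are compact. Put $M=2M_n+3=M_{n+1}$, so that $\deg u_n\le M-1$ as Lemma~\ref{induction lemma} requires.

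The new loop $\Gamma_{M+1}$ should carry the sign dictated by parity, namely negative if $n+1$ is odd and positive if $n+1$ is even. So take $u_{n+1}=Q_\varepsilon$ in the first case and $u_{n+1}=R_\varepsilon$ in the second, with $\varepsilon$ small.
\begin{itemize}
\item Biharmonicity: $z\mapsto \varepsilon^M u_n(z/\varepsilon)$ is biharmonic, because $\Delta^2$ commutes with dilations up to a constant factor. $P_{M+1}$ is biharmonic by Lemma~\ref{lem:biharmonic-bump}.
\item Signs: Lemma~\ref{induction lemma} gives the required signs on $\varepsilon K_\pm$ and on $\Gamma_{M+1}$.
\item Degree: $\deg u_{n+1}\le\max(\deg u_n,\, M+3,\, 2M+2)=2M+2$. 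Since $M=M_{n+1}$, this is the bound carried by the induction.
\end{itemize}
The new loops are $\varepsilon\tilde\Gamma_1,\dots,\varepsilon\tilde\Gamma_n,\Gamma_{M+1}$. Dilation by $\varepsilon$ preserves smoothness and nesting, so the old loops stay nested in the same order.

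\emph{Main obstacle: disjointness and nesting with the new outer loop.} We need all of $\varepsilon\tilde\Gamma_k$ to lie inside the domain bounded by $\Gamma_{M+1}$. This is easy to arrange. The curve $\Gamma_{M+1}$ lies in the annulus $\sqrt{1-\eta}\le r\le\sqrt{1+\eta}$, so the disc of radius $\sqrt{1-\eta}$ is inside it. If $\tilde\Gamma_n\subset\{r\le\rho\}$, it suffices to shrink $\varepsilon$ further so that $\varepsilon\rho<\sqrt{1-\eta}$. All smallness conditions on $\varepsilon$ are finitely many, so they can be imposed simultaneously.

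\emph{Deducing Theorem~\ref{thm:main}.} On each loop $u_n$ has a strict sign, and the signs alternate between consecutive loops. In the annular region between $\tilde\Gamma_k$ and $\tilde\Gamma_{k+1}$, the zero set therefore separates the two boundary curves. Producing an actual smooth nodal loop in each annulus needs the nodal set to be regular there, and I would get this by a generic perturbation. Replace $u_n$ by $u_n-t$ for a small regular value $t$, which exists by Sard's theorem; $u_n-t$ is still biharmonic and keeps the strict signs on the compact loops. The component of $\{u_n=t\}$ separating the two boundary curves of the annulus is then a smooth closed loop around the origin. This gives $n-1$ nested loops, which is enough after running the construction with $n+1$.
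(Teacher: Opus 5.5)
Your argument is the paper's: induct via Lemma~\ref{induction lemma} with $M=M_{n+1}=2M_n+3$, choosing $Q_\varepsilon$ or $R_\varepsilon$ according to the parity of the new outer loop $\Gamma_{M+1}$. You also correctly supply the details the paper leaves implicit, namely biharmonicity under dilation, the bound $\deg u_n\le M_{n+1}-1$, and nesting via $\Gamma_{M+1}\subset\{\sqrt{1-\eta}\le r\le\sqrt{1+\eta}\}$. The one slip is in the base case: $P_1$ has degree $3$, not $2M_1+2=4$, so the corollary's exact degree and your parenthetical ``equality in fact holds'' both fail at $n=1$. Taking $u_1=P_2$ and $\tilde\Gamma_1=\Gamma_2$ instead, which is Lemma~\ref{induction lemma} with $Q=0$ and $M=M_1$, gives $\deg u_n=2M_n+2$ exactly for all $n$.
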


\subsection{Finishing the proof of Theorem \ref{thm:main}}

Consider the bi-harmonic polynomial $u_{n+1}$ constructed in Corollary \ref{cor:main}, which has a nest of $n+1$ consequently nested loops $\tilde\Gamma_k$, $k=1,\dots,n+1$, such that the sign $u_n$ on $\tilde\Gamma_k$ is $(-1)^k$.

By Sard's theorem the zero set of $u=u_{n+1}+\varepsilon$ is non-singular for generic $\varepsilon >0$. For generic $\varepsilon$ the zero set of $u$ consists of smooth loops and smooth curves that run to infinity.

Choosing a generic and sufficiently small $\varepsilon > 0 $, we still have that the sign of $u$ on $\tilde\Gamma_k$ is $(-1)^k$. Since the loops $\tilde\Gamma_k$ $k=1,\dots,n+1$, where $u$ has alternating sign, are consequently nested, it implies that there are $n$ nested nodal loops of $u$.


\bibliographystyle{amsalpha}
\bibliography{references}

\end{document}